\def\BibTeX{{\rm B\kern-.05em{\sc i\kern-.025em b}\kern-.08em
    T\kern-.1667em\lower.7ex\hbox{E}\kern-.125emX}}
\newtheorem{theorem}{Theorem}
\theoremstyle{plain}
\begin{document}

\title{Control of Two Energy Storage Units with Market Impact: Lagrangian Approach and Horizons\\
{\footnotesize }
\thanks{Albert Sol\`a Vilalta was supported by The Maxwell Institute Graduate School in Analysis and its Applications, a Centre for Doctoral Training funded by the UK Engineering and Physical Sciences Research Council (grant EP/L016508/01), the Scottish Funding Council, Heriot-Watt University and the University of Edinburgh.}
}

\author{\IEEEauthorblockN{Miguel F. Anjos}
\IEEEauthorblockA{\textit{Maxwell Institute for Mathematical Sciences} \\
\textit{School of Mathematics} \\
\textit{University of Edinburgh}\\
Edinburgh, United Kingdom \\
anjos@stanfordalumni.org}
\and
\IEEEauthorblockN{James R. Cruise}
\IEEEauthorblockA{\textit{Riverlane Research} \\
Cambridge, United Kingdom \\
james.cruise@riverlane.io}
\and
\IEEEauthorblockN{Albert Sol\`a Vilalta}
\IEEEauthorblockA{\textit{Maxwell Institute for Mathematical Sciences} \\
\textit{School of Mathematics} \\
\textit{University of Edinburgh}\\
Edinburgh, United Kingdom \\
albert.sola@ed.ac.uk}
}

\maketitle

\begin{abstract}
Energy storage and demand-side response will play an increasingly important role in the future electricity system. We extend previous results on a single energy storage unit to the management of two energy storage units cooperating for the purpose of price arbitrage. We consider a deterministic dynamic programming model for the cooperative problem, which accounts for market impact. We develop the Lagrangian theory and present a new algorithm to identify pairs of strategies. While we are not able to prove that the algorithm provides optimal strategies, we give strong numerical evidence in favour of it. Furthermore, the Lagrangian approach makes it possible to identify decision and forecast horizons, the latter being a time beyond which it is not necessary to look in order to determine the present optimal action. In practice, this allows for real-time reoptimization, with both horizons being of the order of days.
\end{abstract}

\begin{IEEEkeywords}
	control, two storage units, arbitrage, price-maker, market impact, energy, Lagrangian.
\end{IEEEkeywords}

\section{Introduction}

Power systems around the world are facing many challenges due to decarbonization. Under the revised Climate Change Act 2008 \cite{ClimateChangeAct08}, the UK set ambitious plans to reduce carbon emissions $100 \%$ (net zero) by 2050 compared to 1990 levels. This requires fundamental changes in many sectors, including the power system, which are already taking place. We are interested in the role grid-scale electric energy storage can play in this transformation.

The deregulation of energy markets, driven in the UK by the Electricity Act 1989 \cite{ElectricityAct89}, created a more favourable environment for energy storage to enter the power system \cite{Graves99}. For example, the price-taker and price-maker cases were studied in \cite{Sioshansi09}, which also analysed the arbitrage value in the former case using PJM price data. They conclude there was a significant increase in the arbitrage value of energy storage in the late 2000s, which led to an increasing interest in energy storage. Moreover, \cite{Denholm10} describes four other reasons for this increasing interest: advances in storage technologies, increase in fossil fuel prices, challenges to sitting new transmission and distribution facilities, and opportunities for storage with variable renewable generation.

A possible way to capture the value of energy storage to the system is through price arbitrage. How should one optimally manage a fleet of electric energy storage units with market impact to maximize profit by buying electricity when it is cheap and selling it when it is expensive? This was studied for a single storage unit in \cite{Cruise19Lagrangian}. The aim of this work is to study the problem for two storage units. We recognize that not all the value of storage can be captured by price arbitrage, since it can provide other services such as operating reserves, firm capacity, network reinforcement deferral, black-start support, power quality and stability, and aid in the integration of renewables \cite{Cruise18Buffering}, \cite{Denholm10}. Nevertheless, arbitrage has already been used to approximate the value of storage \cite{Cruise19Lagrangian}, \cite{Sioshansi09}.

There are essentially two ways in which energy storage units can interact: cooperation or competition. In the former, the objective is to maximize the joint profit made by the units, while in the latter, the objective of each unit is to maximize its own profit. See \cite{Biegel13}, \cite{Cruise18Competition}, \cite{Schill11}, \cite{Wang14} and \cite{Zhang13} for examples of both problems in the context of energy storage or demand-side response. Our work focuses on the cooperative problem.

Market prices can be modelled as deterministic or stochastic. For stochastic approaches, see \cite{Baeuerle16}, \cite{Felix13} and \cite{Secomandi10}. \cite{Secomandi15} argues that assumed probability distributions calibrated to data might be incorrect. Therefore, a deterministic approach that reoptimizes once more accurate forecasts become available might avoid this problem \cite{Barbry19}, \cite{Cruise19Lagrangian}, \cite{Lai10}, \cite{Secomandi10}, \cite{Secomandi15_2}, \cite{Shafiee16}, \cite{Wu12}. We assume that prices are deterministic.

We consider a discrete time mathematical model. We assume a storage unit is characterized by its energy capacity $E$ and power rate constraint $P$. The crucial assumption, which makes the problem interesting, is that units are large enough to have market impact, leading to convex costs. Therefore, we are facing a nonlinear convex dynamic programming problem, for which we take a Lagrangian approach. 

Our model is technology agnostic, and therefore can account for any storage technology, or, more broadly, flexibility services, including demand-side response. At present, pumped-hydro is essentially the only grid-scale storage technology that has market impact. However, the fast decreasing prices of many storage technologies \cite{WorldEnergyOutlook18} might mean that they will start having market impact in the near future.

This problem could be seen as an instance of the \emph{warehouse} or \emph{wheat trading problem}, introduced in \cite{Cahn48}, which reads

\textit{Given a warehouse with fixed capacity and an initial stock of a certain product, which is subject to known seasonal price and cost variations, what is the optimal pattern of purchasing (or production), storage and sales?}

The two key differences between our problem and the literature on the classical warehouse problem \cite{Bellman56}, \cite{Charnes55}, \cite{Dreyfus57} are the power rate constraints and the convexity introduced by the market impact assumption. In the classical case, costs are linear as a function of the amount bought, or concave to account for economics of scale. Furthermore, there is relevant literature in scheduling hydroelectric generation \cite{Ambec03}, \cite{Rangel08}, \cite{Steeger14}, which resembles the discharging process of energy storage.

A novel feature of the results presented in \cite{Cruise19Lagrangian} for the single storage unit problem is the identification of forecast horizons, a time beyond which it is not necessary to look to determine the present optimal action. They appear as a consequence of the capacity constraints. Our numerical experiments in Section \ref{SEC:NumericalImplementationAndExamples} suggest that they also exist for our problem and coincide with those of the storage unit with largest $E/P$ ratio.

The difference between our approach and a standard rolling horizon approach \cite{Sethi91} is that horizons are not specified in advance, but identified in the process of finding a solution. This means that, once horizons are found, we are certain that no more future price information is needed to determine the present action. See \cite{Chand02} for more on horizons.

This work is divided in five sections. In Section \ref{SEC:theModel}, we introduce the mathematical model, and in Section \ref{SEC:LagrangianSufficiency} we present Lagrangian sufficient conditions for a solution to be optimal. Based on them, in Section \ref{SEC:algorithm}, we introduce a new algorithm to find an optimal solution. We do not give a mathematical proof that it provides an optimal solution, but our numerical experiments in Section \ref{SEC:NumericalImplementationAndExamples} suggest it.

\section{The Model} \label{SEC:theModel}

An \emph{energy storage unit} or \emph{unit} $(E, P)$ is determined by its energy capacity $E$ and power rate $P$. The \emph{energy capacity} is the maximum amount of energy that can be stored in the unit, typically in MWh. The \emph{power rate} is the maximum rate at which energy can be charged and discharged, typically in MW. Our modelling assumptions are consistent with \cite{Cruise19Lagrangian}, \cite{Graves99} and \cite{Sioshansi09}, albeit we do not consider efficiencies nor different charging and discharging power rates.

Let $(E_1, P_1), (E_2, P_2)$ be two units. Consider discrete time steps $1, 2, \dots, T$. We assume electricity prices are deterministic. To model them, we introduce \emph{cost functions}
\begin{equation}
	C_t: [- (P_1 + P_2), P_1 + P_2] \longrightarrow \mathbb{R}
\end{equation} 
for every time step $t = 1, 2, \dots, T$. Since we are interested in the cooperative problem, cost functions are functions of the total amount of energy charged or discharged in a time step. For positive $x$, $C_t(x)$ is the cost of buying an amount $x$ of electricity at time $t$, whereas for negative $x$, $C_t(x)$ is the negative of the reward of selling an amount $x$ of electricity at time $t$.

We assume that $C_t$ is monotonically increasing, strictly convex and $C_t(0) = 0$ for $t = 1, 2, \dots, T$. The motivation to consider increasing cost functions is clear; the more energy we buy, the more we have to pay. Convexity accounts for the market impact assumption. The more energy we buy (resp. sell), the higher (resp. lower) the demand is, and therefore the higher (resp. lower) the price per unit of electricity is. It is possible to relax the strict convexity assumption to just convexity, with similar ideas to those in \cite{Cruise19Lagrangian}, but we shall not treat this case here. The last assumption corresponds to the fact that doing nothing should not have a cost nor a reward.

A \emph{strategy} is a pair of vectors $S = (S_1, S_2) \in \mathbb{R}^{T + 1} \times \mathbb{R}^{T + 1}$, where $S_{j, t}$ denotes the level of charge of unit $j$ at time $t$ for $j = 1, 2$ and $t = 0, 1, \dots, T$. We fix the initial and final levels of charge to be $\bar{S}_{j, 0}$ and $\bar{S}_{j, T}$ for $j = 1, 2$. This last fixing is essential, otherwise units would be as empty as possible at time $T$.

Given a strategy $S$, we define the increments $(x_1(S), x_2(S)) \in \mathbb{R}^T \times \mathbb{R}^T$ associated to it by
\begin{equation} \label{EQ:increments}
	x_{j, t}(S) := S_{j, t} - S_{j, t - 1}
\end{equation}
for $j = 1, 2$ and $t = 1, 2, \dots, T$. Our aim is to maximize the profit made by the two units subject to capacity and rate constraints. This leads to the following optimization problem:

$\boldsymbol{\mathcal{P}}$: Minimize
\begin{equation} \label{EQ:objectiveFunction}
	\sum_{t = 1}^T C_t(x_{1, t}(S) + x_{2, t}(S))
\end{equation}
amongst $S \in \mathbb{R}^{T + 1} \times \mathbb{R}^{T + 1}$ subject to capacity constraints
\begin{equation} \label{EQ:capacityConstraints}
	\begin{split}
	& S_{j, 0} = \bar{S}_{j, 0} \\
	& 0 \leq S_{j, t} \leq E_j, \hspace{2mm} 1 \leq t \leq T - 1 \\
	& S_{j, T} = \bar{S}_{j, T} \\
	\end{split}
\end{equation}
and rate constraints
\begin{equation} \label{EQ:rateConstraints}
	-P_j \leq x_{j, t}(S_j) \leq P_j, \hspace{2mm} 1 \leq t \leq T - 1
\end{equation}
for $j = 1, 2$.

We have taken this problem from \cite{Cruise18Competition}, but we recognize that the modelling is very similar to other previous works, for instance \cite{Graves99} and \cite{Sioshansi09}. A strategy $S$ satisfying both capacity \eqref{EQ:capacityConstraints} and rate \eqref{EQ:rateConstraints} constraints is called a \emph{feasible strategy}.

There are at least two cases where problem $\boldsymbol{\mathcal{P}}$ reduces to single unit problems. First, if the cost functions are linear, the objective function \eqref{EQ:objectiveFunction} becomes the sum of a function of $S_1$ and a function of $S_2$, since $x_{j, t}(S)$ is a function of $S_j$ only, see \eqref{EQ:increments}. Therefore, solving $\boldsymbol{\mathcal{P}}$ is equivalent to solving two single unit problems. 

Second, if the units satisfy
\begin{equation}
	\frac{E_1}{P_1} = \frac{E_2}{P_2},
\end{equation}
then we can consider the single unit problem with $(E, P) = (E_1 + E_2, P_1 + P_2)$. Define $\lambda \in [0, 1]$ to satisfy
\begin{equation}
	E_1 = \lambda E = \lambda (E_1 + E_2).
\end{equation}
Given any feasible strategy $R$ for the single unit problem, consider the strategy $S = (S_1, S_2) = (\lambda R, (1 - \lambda) R)$, which is feasible for problem $\boldsymbol{\mathcal{P}}$ and has the same cost as $R$.

\section{Lagrangian sufficiency} \label{SEC:LagrangianSufficiency}

We now present the Lagrangian sufficiency theorem for problem $\boldsymbol{\mathcal{P}}$. It introduces $\mu_1^*, \mu_2^* \in \mathbb{R}^T$, which are essentially vectors of cumulative Lagrange multipliers. They play a crucial role in our algorithm. The theorem is strongly based on Theorem 1 from \cite{Cruise19Lagrangian}, where the corresponding cumulative Lagrange multipliers are described in detail.

\begin{theorem} \label{THM:sufficientConditions2Stores}
Assume there exist pairs $(S_1^*, \mu_1^*), (S_2^*, \mu_2^*) \in \mathbb{R}^{T + 1} \times \mathbb{R}^T$ satisfying the following conditions:
\begin{enumerate}
	\item[(i)] $S^* = (S_1^*, S_2^*)$ is a feasible strategy for problem $\boldsymbol{\mathcal{P}}$.
	\item[(ii)] For $t = 1, 2, \dots, T$, $(x_{1, t}(S^*), x_{2, t}(S^*))$ minimizes
	\begin{equation} \label{EQ:minimisationEquation2Stores}
		C_t(x_1 + x_2) - \mu_{1, t}^* x_1 - \mu_{2, t}^* x_2
	\end{equation}
	amongst $(x_1, x_2) \in [-P_1, P_1] \times [-P_2, P_2]$.
	\item[(iii)] $(S_1^*, \mu_1^*), (S_2^*, \mu_2^*)$ satisfy the \emph{complementary slackness conditions}
	\begin{equation*}
		\left \{
			\begin{matrix}
				\mu_{j,t + 1}^* = \mu_{j, t}^* & \text{if } & 0 < S_{j, t}^* < E_j \\
				\mu_{j, t + 1}^* \leq \mu_{j, t}^* & \text{if } & S_{j, t}^* = 0 \\
				\mu_{j, t + 1}^* \geq \mu_{j, t}^* & \text{if } & S_{j, t}^* = E_j \\
			\end{matrix}
		\right .
	\end{equation*}
	for $j = 1, 2$ and $t = 1, 2, \dots, T - 1$.
\end{enumerate}
Then $(S_1^*, S_2^*)$ solves problem $\boldsymbol{\mathcal{P}}$.
\end{theorem}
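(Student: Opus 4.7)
The plan is to follow the standard Lagrangian sufficiency template for convex programs, closely paralleling Theorem~1 of \cite{Cruise19Lagrangian}. I would interpret $\mu_1^*$ and $\mu_2^*$ as cumulative multipliers for the capacity constraints in \eqref{EQ:capacityConstraints}, while the rate constraints \eqref{EQ:rateConstraints} are built directly into the feasible set $[-P_1,P_1]\times[-P_2,P_2]$ of the per-slice minimization in (ii). For an arbitrary feasible competitor $S$, the goal is to show
\[
\sum_{t=1}^T C_t\bigl(x_{1,t}(S^*) + x_{2,t}(S^*)\bigr) \leq \sum_{t=1}^T C_t\bigl(x_{1,t}(S) + x_{2,t}(S)\bigr),
\]
which I would obtain by comparing the Lagrangian-type expression $\sum_t \bigl[C_t(x_1+x_2) - \mu_{1,t}^* x_1 - \mu_{2,t}^* x_2\bigr]$ evaluated at $S$ and at $S^*$.

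First I would apply summation by parts to $\sum_{t=1}^T \mu_{j,t}^*\, x_{j,t}(S)$, using \eqref{EQ:increments}. Because $S$ and $S^*$ share the fixed boundary values $\bar S_{j,0}$ and $\bar S_{j,T}$, the boundary contributions cancel in the difference and one is left with
\[
\sum_{t=1}^T \mu_{j,t}^*\bigl(x_{j,t}(S) - x_{j,t}(S^*)\bigr) = \sum_{t=1}^{T-1} (\mu_{j,t}^* - \mu_{j,t+1}^*)(S_{j,t} - S_{j,t}^*).
\]
A case analysis driven by the complementary slackness conditions in (iii) then shows the summand is nonnegative at every interior time $t$: it vanishes when $0 < S_{j,t}^* < E_j$; when $S_{j,t}^* = 0$ both factors are nonnegative (using $S_{j,t} \geq 0$ from feasibility of $S$); and when $S_{j,t}^* = E_j$ both factors are nonpositive (using $S_{j,t} \leq E_j$). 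Summing over $j=1,2$ yields $\sum_{j,t} \mu_{j,t}^*\bigl(x_{j,t}(S) - x_{j,t}(S^*)\bigr) \geq 0$.

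Next I would invoke condition (ii) slice by slice. Feasibility of $S$ guarantees $(x_{1,t}(S), x_{2,t}(S)) \in [-P_1,P_1]\times[-P_2,P_2]$, so it is an admissible competitor in the minimization \eqref{EQ:minimisationEquation2Stores}, giving
\[
C_t\bigl(x_{1,t}(S^*)+x_{2,t}(S^*)\bigr) - \sum_{j}\mu_{j,t}^* x_{j,t}(S^*) \;\leq\; C_t\bigl(x_{1,t}(S)+x_{2,t}(S)\bigr) - \sum_{j} \mu_{j,t}^* x_{j,t}(S).
\]
Summing over $t$ and rearranging, the multiplier terms produce precisely the nonnegative quantity from the previous step; discarding it yields the desired cost inequality and hence optimality of $S^*$.

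I do not anticipate any serious obstacle: the argument is essentially the single-unit proof of \cite{Cruise19Lagrangian} run with two independent families of cumulative multipliers, the two units being coupled only through the shared argument $x_1+x_2$ of $C_t$, which condition (ii) handles transparently. The one mildly delicate bookkeeping step is the summation by parts combined with the sign analysis from complementary slackness, but it is routine once the two units are kept separated. Note also that strict convexity of $C_t$ plays no role in the sufficiency direction; it would be relevant only for uniqueness or for characterizing the minimizer in \eqref{EQ:minimisationEquation2Stores}.
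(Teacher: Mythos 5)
Your proof is correct and takes essentially the same route as the paper's: condition (ii) applied slice by slice to the feasible competitor, summation by parts using the fixed boundary levels $\bar S_{j,0}$ and $\bar S_{j,T}$, and the sign analysis from the complementary slackness conditions (iii). The only differences are cosmetic (you perform the summation by parts before invoking (ii), and work with $S_{j,t}-S_{j,t}^*$ rather than $S_{j,t}^*-S_{j,t}$); your side remark that strict convexity is not needed for sufficiency is also accurate.
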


\begin{proof}
Let $(S_1, S_2)$ be any feasible strategy for problem $\boldsymbol{\mathcal{P}}$. From (ii), we get
\begin{equation*}
	\begin{split}
		& \sum_{t = 1}^T C_t(x_{1, t}(S^*) + x_{2, t}(S^*)) - \mu_{1, t}^* x_{1, t}(S^*) - \mu_{2, t}^* x_{2, t}(S^*) \\
		& \leq \sum_{t = 1}^T C_t(x_{1, t}(S) + x_{2, t}(S)) - \mu_{1, t}^* x_{1, t}(S) - \mu_{2, t}^* x_{2, t}(S). \\
	\end{split}
\end{equation*}
Rearranging and using the capacity constraints $S_{j, 0} = \bar{S}_{j, 0}, S_{j, T} = \bar{S}_{j, T}$ for $j = 1, 2$, we get
\begin{equation*}
	\begin{split}
	& \sum_{t = 1}^T C_t(x_{1, t}(S^*) + x_{2, t}(S^*)) - C_t(x_{1, t}(S) + x_{2, t}(S)) \\
	& \leq \sum_{t = 1}^T \mu_{1, t}^*(x_{1, t}(S^*) - x_{1, t}(S)) + \mu_{2, t}^*(x_{2, t}(S^*) - x_{2, t}(S)) \\
	& = \sum_{t = 1}^T \big [\mu_{1, t}^*(S_{1, t}^* - S_{1, t - 1}^* - S_{1, t} + S_{1, t- 1}) \\ 
	& \hspace{8mm} + \mu_{2, t}^* (S_{2, t}^* - S_{2, t - 1}^* - S_{2, t} + S_{2, t -1}) \big ] \\
	& = \sum_{t = 1}^{T - 1} \big [(\mu_{1, t}^* - \mu_{1, t + 1}^*)(S_{1, t}^* - S_{1, t}) \\
	& \hspace{8mm} + (\mu_{2, t}^* - \mu_{2, t + 1}^*)(S_{2, t}^* - S_{2, t}) \big ] \leq 0 \\
	\end{split}
\end{equation*}
	where the last inequality follows from (iii).
\end{proof}

\section{Algorithm} \label{SEC:algorithm}

We now introduce a new algorithm to solve problem $\boldsymbol{\mathcal{P}}$, via Theorem \ref{THM:sufficientConditions2Stores}. It uses the single unit algorithm from \cite{Cruise19Lagrangian} in intermediate steps. The algorithm from \cite{Cruise19Lagrangian} performs a search on the parameter $\mu \in \mathbb{R}^T$. Given strictly convex cost functions $C_t$ for $t = 1, 2, \dots, T$, it finds a parameter $\mu^* \in \mathbb{R}^T$ and a vector $x(\mu^*) \in [-P, P]^T$, whose components $x_t(\mu) = x_t(\mu_t^*)$ are both the unique minimizer of
\begin{equation}
	C_t(x) - \mu_t^* x, \hspace{2mm} -P \leq x \leq P
\end{equation}
and an optimal action at time $t$. To find them, the functions
\begin{equation}
	\begin{matrix}
		\hat{x}_t: & \mathbb{R} & \longrightarrow & [-P, P] \\
		& \mu & \longmapsto & \hat{x}_t(\mu)
	\end{matrix}
\end{equation}
need to be monotonically increasing and surjective for $t = 1, 2, \dots, T$, where $\hat{x}_t(\mu)$ denotes the unique minimizer of
\begin{equation}
	C_t(x) - \mu x, \hspace{2mm} -P \leq x \leq P.
\end{equation}

Coming back to problem $\boldsymbol{\mathcal{P}}$, for any $(\mu_1, \mu_2) \in \mathbb{R}^2$, define the function $C_t^{\mu_1, \mu_2}$ by
\begin{equation}
	C_t^{\mu_1, \mu_2}(x_1, x_2) := C_t(x_1 + x_2) - \mu_1 x_1 - \mu_2 x_2,
\end{equation}
where $x_j \in [-P_j, P_j]$ for $j = 1, 2$. As in the single unit problem, we would like to associate a unique minimizer of $C_t^{\mu_1, \mu_2}$ to each parameter $(\mu_1, \mu_2)$ in such a way that every minimizer of $C_t^{\mu_1, \mu_2}$ has a parameter associated to it. By considering only increments obtained this way, condition (ii) of Theorem \ref{THM:sufficientConditions2Stores} will be automatically satisfied.

If $\mu_1 \neq \mu_2$, then the cost function $C_t$ being strictly convex implies that there is a unique minimizer of $C_t^{\mu_1, \mu_2}$. However, if $\mu_1 = \mu_2$, then $C_t^{\mu_1, \mu_1}$ becomes a function of $x_1 + x_2$ and therefore constant on the segments $r_{\Gamma}$ defined by
\begin{equation}
	\{(x_1, x_2) \in [-P_1, P_1] \times [- P_2, P_2] \hspace{2mm} | \hspace{2mm} x_1 + x_2 = \Gamma \}
\end{equation}
for all $\Gamma \in \mathbb{R}$. Since $C_t$ is strictly convex, there exists a unique segment that minimizes $C_t^{\mu_1, \mu_1}$, which we call $r_{\mu_1}$. To overcome this difficulty, we enlarge the space of parameters $(\mu_1, \mu_2) \in \mathbb{R}^2$. Let $\kappa_2 \in [0, 1]$ and define
\begin{equation} \label{EQ:enlargedParameter}
	\nu = (\nu_1, \nu_2) := (\mu_1, (\mu_2, \kappa_2)) \in A,
\end{equation}
where $A := \mathbb{R} \times (\mathbb{R} \times [0, 1])$. In what follows, it will be understood that $\mu_1$, $\mu_2$ and $\kappa_2$ are the components of the enlarged parameter $\nu$, as in \eqref{EQ:enlargedParameter}, unless stated otherwise.

We define an order relation on the second component $\nu_2$ of enlarged parameters by $\nu_2 = (\mu_2, \kappa_2) < \nu_2' = (\mu_2', \kappa_2')$ if
\begin{equation} \label{EQ:orderEnlargedParam}
	 \mu_2 < \mu_2' \text{ or } (\mu_2 = \mu_2' \text{ and } \kappa_2 < \kappa_2').
\end{equation}
Given $\nu \in A$, assign a minimizer $(\hat{x}_{1, t}(\nu), \hat{x}_{2, t}(\nu))$ of $C_t^{\mu_1, \mu_2}$ to it as follows. If $\mu_1 \neq \mu_2$, let it be the unique minimizer of $C_t^{\mu_1, \mu_2}$, irrespective of the value of $\kappa_2$. If $\mu_1 = \mu_2$, let $x_{j, t}^-(\mu_1)$ and $x_{j, t}^+(\mu_1)$ be the minimum and maximum values of the $x_j$-component on the minimizing segment $r_{\mu_1}$. Define
\begin{equation}
	\begin{split}
	\hat{x}_{1, t}(\nu) & := (1 - \kappa_2) x_{1, t}^+(\mu_1) + \kappa_2 x_{1, t}^-(\mu_1) \\
	\hat{x}_{2, t}(\nu) & := (1 - \kappa_2) x_{2, t}^-(\mu_1) + \kappa_2 x_{2, t}^+(\mu_1)
	\end{split}
\end{equation}
for $t = 1, 2, \dots, T$. In other words, we use the extra parameter $\kappa_2$ to parametrize the minimizing line $r_{\mu_1}$. We can now define the strategy $(S_1(\nu), S_2(\nu))$ associated to $\nu \in A$ recursively, by following these increments, i.e.,
\begin{equation}
	S_{j, 0}(\nu) := \bar{S}_{j, 0}, \hspace{2mm} S_{j, t}(\nu) := S_{j, t - 1}(\nu) + \hat{x}_{j, t}(\nu)
\end{equation}
for $j = 1, 2$ and $t = 1, 2, \dots, T$. In other words,
\begin{equation} \label{EQ:strategiesAssociatedToNuv2}
	S_{j, t}(\nu) := \bar{S}_{j, 0} + \sum_{s = 1}^t \hat{x}_{j, s}(\nu)
\end{equation}
for $j = 1, 2$ and $t = 0, 1, \dots, T$.

Our aim is now to choose a vector of enlarged parameters $(\nu_1, \dots, \nu_T) \in A^T$ such that conditions (i) and (iii) of Theorem \ref{THM:sufficientConditions2Stores} are also satisfied. The crucial observation is that fixing $\mu_1$ (or, analogously, $\mu_2$), brings us to a situation where we can apply the single unit algorithm \cite{Cruise19Lagrangian} to unit 2. Indeed, for fixed $\mu_1 = \bar{\mu}_1$, for every enlarged parameter $\nu = (\bar{\mu}_1, (\mu_2, \kappa_2))$, there exists a unique minimizer $(\hat{x}_{1, t}(\nu), \hat{x}_{2, t}(\nu))$ of $C_t^{\bar{\mu}_1, \mu_2}$ associated to it. Furthermore, the functions
\begin{equation}
	\begin{matrix}
		\hspace{5mm} \hat{x}_{2, t}(\bar{\mu}_1, \cdot): & \mathbb{R} \times [0, 1] & \longrightarrow & [-P_2, P_2] \\
		& (\mu_2, \kappa_2) & \longmapsto & \hat{x}_{2, t}(\bar{\mu}_1, (\mu_2, \kappa_2)) \\
	\end{matrix}
\end{equation}
are monotonically increasing and surjective for $t = 1, 2, \dots T$. Note that to guarantee monotonicity, definition \eqref{EQ:orderEnlargedParam} is essential if $\mu_2 = \bar{\mu}_1$. Therefore, we can apply the single unit algorithm to unit 2 with $\mu_1 = \bar{\mu}_1$ fixed, and obtain a vector of parameters $\nu_2$, which we denote by $M_2(\bar{\mu}_1) = (M_{2, 1}(\bar{\mu}_1), \dots, M_{2, T}(\bar{\mu}_1)) \in (\mathbb{R} \times [0, 1])^T$ together with the associated strategies $(S_1(\bar{\mu}_1, M_2(\bar{\mu}_1)), S_2(\bar{\mu}_1, M_2(\bar{\mu}_1)))$, whose components are given by
\begin{equation} \label{EQ:strategyAssociatedMu1}
	S_{j, t}(\bar{\mu}_1, M_2(\bar{\mu}_1)) := \bar{S}_{j, 0} + \sum_{s = 1}^t \hat{x}_{j, s}(\bar{\mu}_1, M_{2, s}(\bar{\mu}_1))
\end{equation}
for $j = 1, 2$ and $t = 0, 1, \dots, T$. Note the difference between \eqref{EQ:strategiesAssociatedToNuv2} and \eqref{EQ:strategyAssociatedMu1}. In \eqref{EQ:strategyAssociatedMu1}, $\nu_2$ changes over time, while in \eqref{EQ:strategiesAssociatedToNuv2} it was fixed. The obtained strategy is feasible for unit 2, since it is a direct application of the single unit algorithm, whereas unit 1 might still break its capacity constraints.

By applying the single unit algorithm to unit 2 with $\mu_1 = \bar{\mu}_1$ fixed, we reduce the dimension of the vector of enlarged parameters needed to obtain strategies associated to a parameter. This is clear from \eqref{EQ:strategyAssociatedMu1}, which depends only on $\bar{\mu}_1$. Therefore, we could continue by applying the single unit algorithm to unit 1, doing a linear search in $\mu_1$. For every $\bar{\mu}_1$, apply the process described above to obtain $M_2(\bar{\mu}_1)$ and the associated strategy \eqref{EQ:strategyAssociatedMu1}. It is difficult to show that the functions
\begin{equation} \label{EQ:incrementsStoreOneMonotone}
	\begin{matrix}
		\hspace{5mm} \hat{x}_{1, t}: & \mathbb{R} & \longrightarrow & [-P_2, P_2] \\
		& \mu_1 & \longmapsto & \hat{x}_{1, t}(\mu_1, M_{2, t}(\mu_1)) \\
	\end{matrix}
\end{equation}
are monotonically increasing and surjective and therefore there is no guarantee that we can do this. The main difficulty is that $\nu_2 = M_{2, t}(\mu_1)$ is not fixed, but varies as we change $\mu_1$. Our numerical experiments in Section \ref{SEC:NumericalImplementationAndExamples} suggest that it is possible to apply the single unit algorithm to unit 1.

In order to satisfy condition (iii) of Theorem \ref{THM:sufficientConditions2Stores}, we need to make sure that $\mu_{j, t}$ changes only when unit $j$ is empty or full. The proposed algorithm does not change $\mu_{1, t}$ unless unit 1 is empty or full, since $\mu_1$ is chosen in the outer application of the single unit algorithm. Nevertheless, a change in $\mu_{1, t}$, when unit 1 is full (resp. empty), might change $M_{2, t}(\mu_1)$, which would make unit 2 not satisfy condition (iii) of Theorem \ref{THM:sufficientConditions2Stores}, unless unit 2 is also full (resp. empty). Therefore, can any unit play the role of unit 1, or we need to make a particular choice?

The answer to this question is that unit 1 needs to satisfy
\begin{equation} \label{EQ:orderInStores}
	\frac{E_1}{P_1} > \frac{E_2}{P_2}.
\end{equation}
The rationale behind this choice is that the larger the $E/P$ ratio is, the longer it takes a unit to complete a charging/discharging cycle \cite{Cruise19Lagrangian}. Therefore, for unit 2 to be empty (resp. full) whenever unit 1 is also empty (resp. full), unit 2 needs to have a shorter cycle. There is no guarantee that this will happen, but the opposite choice would make it almost impossible. We explore this numerically in Section \ref{SEC:NumericalImplementationAndExamples}, linking it to decision and forecast horizons for problem $\boldsymbol{\mathcal{P}}$.

The algorithm may be summarized as follows:

\begin{enumerate}
	\item[1.] Sort the units by $E/P$-ratios, according to \eqref{EQ:orderInStores}.
	\item [2.] For every $\mu_1 \in \mathbb{R}$, obtain $M_2(\mu_1) = (M_{2, 1}(\mu_1), \dots, M_{2, T}(\mu_1)) \in (\mathbb{R} \times [0, 1])^T$ and associated strategies as defined by \eqref{EQ:strategyAssociatedMu1}.
	\item [3.] Find $\mu_1^* \in \mathbb{R}$ that makes unit 1 follow the single unit algorithm, and identify the corresponding decision and forecast horizons $\tau_1$ and $\bar{\tau}_1$. Define the strategy $S^* = (S_1^*, S_2^*)$ until the decision horizon $\tau_1$ to be
		\begin{equation}
			S_{j, t}^* = S_{j, t}(\mu_1^*, M_2(\mu_1^*))
		\end{equation}
		for $j = 1, 2$ and $t = 0, 1, \dots, \tau_1$.
	\item[4.] At this point, from the single unit algorithm \cite{Cruise19Lagrangian}, we know that
\begin{equation}
	S_{1, \tau_1}^* = 0 \text{ or } S_{1, \tau_1}^* = E_1.
\end{equation}	
	 Check whether unit 2 is in the same state as unit 1, i.e., empty (resp. full) if unit 1 is empty (resp. full).
\end{enumerate}
If $\tau_1 = T$, stop. Otherwise, go back to 2. with $\bar{S}_{j, 0} = S_{j, \tau_1}^*$.

\section{Numerical Implementation and Example} \label{SEC:NumericalImplementationAndExamples}

\begin{figure*}[h!]
	\centering
	\includegraphics[width= \textwidth]{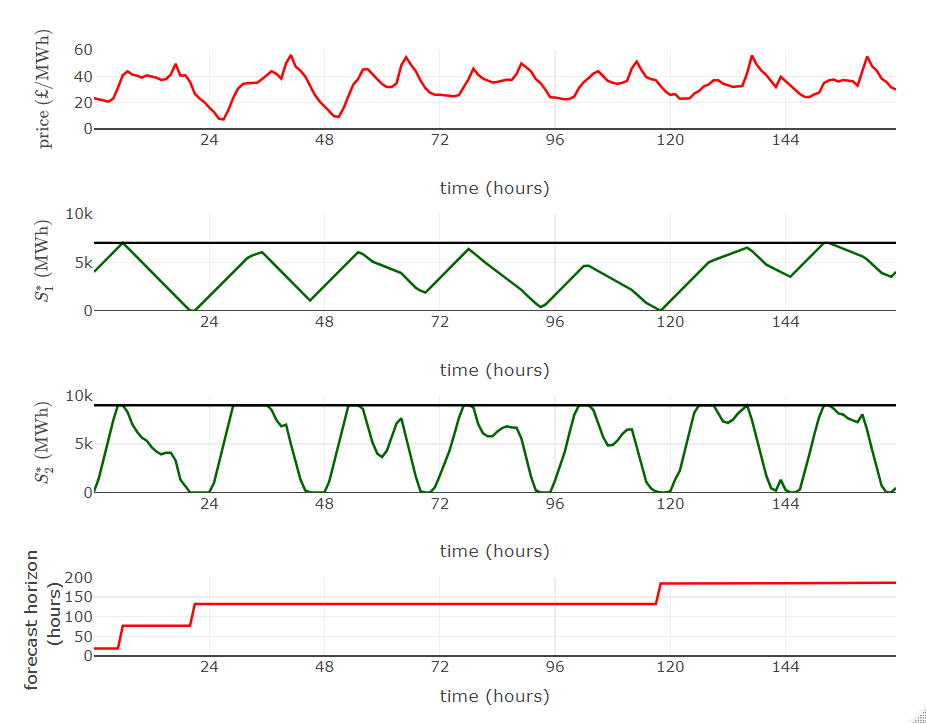}
	\caption{Control of two storage units provided by our algorithm.}
	\label{FIG:twoStoresOneWeek}
\end{figure*}

We present a numerical example using UK day-ahead hourly electricity prices of January 2020 \cite{NordPool}. Price data is given by a vector $p = (p_1, \dots, p_T) \in \mathbb{R}^T$, where $p_t$ is the reference price at time $t$. We assume that cost functions are given by
\begin{equation} \label{EQ:costFunctionsFromPrices}
	C_t(\xi) = (p_t + \lambda p_t \xi) \xi, \hspace{2mm} \xi \in [-(P_1 + P_2), P_1 + P_2]
\end{equation}
for any $t = 1, 2, \dots, T$. Here $\xi$ represents the combined action of the units and $\lambda > 0$ is the market impact factor. Market impact is modelled as in \cite{Cruise19Lagrangian}, which is consistent with existing energy economics literature \cite{Sioshansi10}, \cite{Sioshansi14}.

Consider two energy storage units $(E_1, P_1) = (7000, 500)$ and $(E_2, P_2) = (9000, 2000)$, which correspond approximately to the energy capacities in MWh and power rates in MW of Cruachan and Dinorwig pumped-storage power stations, respectively \cite{REA16}. Since we are considering hourly time periods, the maximum amount of energy unit $j$ can be charged or discharged in one time period is $P_j$ for $j = 1, 2$. Note that, although Dinorwig has larger energy capacity and power rate, it plays the role of unit 2 due to its smaller $E/P$ ratio. We consider a market impact factor $\lambda = 5 \times 10^{-5}$, which corresponds to that considered in \cite{Cruise19Lagrangian} after rescaling the data.

We present the numerical results in Figure \ref{FIG:twoStoresOneWeek}. We focus on the week starting on Monday, January 13, the first time period displayed being the first hour that day. The algorithm was run for the whole month of January to have realistic initial levels of charge and capture weekday-weekend variations.

Figure \ref{FIG:twoStoresOneWeek} contains four plots. The $x$-axis is always time, in hours. From top to bottom, the first plot contains the prices $p$ \cite{NordPool}. The second and third plots contain the components of the strategy $S^* = (S_1^*, S_2^*)$ obtained by the algorithm. Finally, the bottom plot contains the forecast horizon, in hours.

The first observation is that the algorithm produces a feasible strategy. This is strong evidence suggesting that \eqref{EQ:incrementsStoreOneMonotone} is indeed monotonically increasing and surjective. Furthermore, we can also see that whenever unit 1 is full (resp. empty), unit 2 is also full (resp. empty). This is strong evidence suggesting that the obtained strategy $S^*$ and $\mu$-values satisfy condition (iii) of Theorem \ref{THM:sufficientConditions2Stores}. It also suggests that decision and forecast horizons for problem $\boldsymbol{\mathcal{P}}$ should be those of unit 1. Moreover, since unit 2 gets empty (resp. full) without unit 1 being empty (resp. full), as in the period between $t = 24$ and $t = 48$, it is clear that the choice in \eqref{EQ:orderInStores} is essential.

We can check a posteriori if the assumptions of Theorem \ref{THM:sufficientConditions2Stores} are satisfied, which is the case for the cost functions considered, meaning that the obtained strategy is optimal. An open question is whether the algorithm produces a feasible strategy for any cost functions satisfying the assumptions in Section \ref{SEC:theModel}, and, if so, whether it is optimal.

We also observe that there are periods of cross charging, where one unit charges while the other discharges, as it can be seen before the evening peaks, at around $t = 14, 39, 62, 84, 109, 134, 159$. We expect the frequency of these periods to be reduced by introducing efficiencies into the model, but might still be valuable in certain circumstances.

The bottom plot on forecast horizons is interesting. Since a group of actions are determined simultaneously, namely those actions to be taken before the next decision horizon, the forecast horizon at those times stays constant. The forecast horizon increases rapidly until $t = 20$, to then stay constant until $t = 118$, as unit 1 does not get empty or full between those times. The difference in length of the periods where the forecast horizon stays constant highlights that they are not set in advance, as in a standard rolling horizon approach \cite{Sethi91}. Furthermore, the look-ahead time, i.e., the difference between the forecast horizon at time $t$ and $t$ is of the order of days.

Finally, we observe that for the given prices $p$, the units make most of their profit from intraday price variations. This is clear from the strong daily cycles of both units, which are consistent with the price cycles. Unit 1 also takes advantage of cheaper weekend prices, by starting full on Monday morning, $t = 6$, and being empty by Friday night, $t = 118$.

\section{Conclusions}

We have presented a model, developed the associated Lagrangian theory, and introduced a new algorithm to solve the cooperative two storage unit problem with market impact. We have not given a mathematical proof that the algorithm provides an optimal solution, but the numerical experiments give strong evidence in favour of it. They also suggest that decision and forecast horizons exist for this problem and are precisely those of the storage unit with largest $E/P$ ratio.

In a real-world application, prices are uncertain and would need to be forecasted. Furthermore, storge units might need to commit to their actions some time in advance. The existence of decision and forecast horizons makes it possible for the storage units to commit until the next decision horizon and reoptimize their actions after that time with more accurate price forecasts. This makes this approach suitable to deal with uncertainty.

Future work will consider the $n$ storage unit problem and deal with storage efficiencies.

\section*{Acknowledgments}

The authors wish to thank Fraser Daly, Chris Dent, Jean Lasserre, Seva Shneer and Stan Zachary for very helpful discussions and the anonymous reviewers for many useful comments and suggestions.

\bibliography{references}

\begin{thebibliography}{10}

\bibitem{NordPool}
Nord {P}ool {UK} {D}ay-{A}head {A}uction {P}rices.
\newblock Available at:
  https://www.nordpoolgroup.com/Market-data1/Dayahead/Area-Prices/ALL1/Hourly/?view=table.

\bibitem{Ambec03}
S.~Ambec and J.~A. Doucet.
\newblock Decentralizing hydro power production.
\newblock {\em The Canadian Journal of Economics / Revue canadienne
  d'Economique}, 36(3):587--607, 2003.

\bibitem{REA16}
Renewable~Energy Association.
\newblock Energy {S}torage in the {UK}: {A}n {O}verview, 2016.
\newblock Available at:
  https://www.r-e-a.net/resources/energy-storage-in-the-uk-an-overview/.

\bibitem{Barbry19}
A.~Barbry, M.~F. Anjos, E.~Delage, and K.~R. Schell.
\newblock Robust self-scheduling of a price-maker energy storage facility in
  the {N}ew {Y}ork electricity market.
\newblock {\em Energy Economics}, 78:629 -- 646, 2019.

\bibitem{Baeuerle16}
N.~B{\"a}uerle and V.~Riess.
\newblock Gas storage valuation with regime switching.
\newblock {\em Energy Systems}, 7(3):499--528, 2016.

\bibitem{Bellman56}
R.~Bellman.
\newblock On the theory of dynamic programming-a warehousing problem.
\newblock {\em Management Science}, 2(3):272--275, 1956.

\bibitem{Biegel13}
B.~{Biegel}, P.~{Andersen}, T.~S. {Pedersen}, K.~M. {Nielsen}, J.~{Stoustrup},
  and L.~H. {Hansen}.
\newblock Electricity market optimization of heat pump portfolio.
\newblock In {\em 2013 IEEE International Conference on Control Applications
  (CCA)}, pages 294--301, Aug 2013.

\bibitem{Cahn48}
A.~S. Cahn.
\newblock The warehouse problem.
\newblock {\em Bulletin of the American Mathematical Society},
  54(11):1073--1073, 1948.

\bibitem{Chand02}
S.~Chand, V.~Ning~Hsu, and S.~Sethi.
\newblock Forecast, solution, and rolling horizons in operations management
  problems: A classified bibliography.
\newblock {\em Manufacturing \& Service Operations Management}, 4(1):25--43,
  2002.

\bibitem{Charnes55}
A.~Charnes and W.~W. Cooper.
\newblock Generalizations of the warehousing model.
\newblock {\em Journal of the Operational Research Society}, 6(4):131--172,
  1955.

\bibitem{Cruise19Lagrangian}
J.~R. Cruise, L.~Flatley, R.~J. Gibbens, and S.~Zachary.
\newblock Control of energy storage with market impact: Lagrangian approach and
  horizons.
\newblock {\em Operations Research}, 67(1):1--9, 1 2019.

\bibitem{Cruise18Competition}
J.~R. Cruise, L.~Flatley, and S.~Zachary.
\newblock Impact of storage competition on energy markets.
\newblock {\em European Journal of Operational Research}, 2018.

\bibitem{Cruise18Buffering}
J.~R. Cruise and S.~Zachary.
\newblock The optimal control of storage for arbitrage and buffering, with
  energy applications.
\newblock In {\em Forecasting and Risk Management for Renewable Energy
  Conference 2017}, 2018.

\bibitem{Denholm10}
P.~Denholm, E.~Ela, B.~Kirby, and M.~Milligan.
\newblock Role of energy storage with renewable electricity generation.
\newblock 1 2010.

\bibitem{Dreyfus57}
S.~E. Dreyfus.
\newblock An analytic solution of the warehouse problem.
\newblock {\em Management Service}, 4(1):99--104, 1957.

\bibitem{WorldEnergyOutlook18}
International {E}nergy {A}gency.
\newblock World energy outlook 2018, 2018.
\newblock Available at: https://www.iea.org/reports/world-energy-outlook-2018.

\bibitem{Felix13}
B.~Felix, O.~Woll, and C.~Weber.
\newblock Gas storage valuation under limited market liquidity: an application
  in {G}ermany.
\newblock {\em The European Journal of Finance}, 19(7-8):715--733, 2013.

\bibitem{ElectricityAct89}
UK~Government.
\newblock Electricity {A}ct 1989, 1989.
\newblock Available at: http://www.legislation.gov.uk/ukpga/1989/29/contents.

\bibitem{ClimateChangeAct08}
UK~Government.
\newblock Climate {C}hange {A}ct 2008, 2019.
\newblock Available at: http://www.legislation.gov.uk/ukpga/2008/27/contents.

\bibitem{Graves99}
F.~Graves, T.~Jenkin, and D.~Murphy.
\newblock Opportunities for electricity storage in deregulating markets.
\newblock {\em The Electricity Journal}, 12(8):46 -- 56, 1999.

\bibitem{Lai10}
G.~Lai, F.~Margot, and N.~Secomandi.
\newblock An approximate dynamic programming approach to benchmark
  practice-based heuristics for natural gas storage valuation.
\newblock {\em Operations Research}, 58(3):564--582, 2010.

\bibitem{Rangel08}
L.~F. Rangel.
\newblock Competition policy and regulation in hydro-dominated electricity
  markets.
\newblock {\em Energy Policy}, 36(4):1292 -- 1302, 2008.

\bibitem{Schill11}
W.~Schill and C.~Kemfert.
\newblock Modeling strategic electricity storage: The case of pumped hydro
  storage in {G}ermany.
\newblock {\em The Energy Journal}, 32(3):59--87, 2011.

\bibitem{Secomandi10}
N.~Secomandi.
\newblock Optimal commodity trading with a capacitated storage asset.
\newblock {\em Management Science}, 56(3):449--467, 2010.

\bibitem{Secomandi15_2}
N.~Secomandi.
\newblock Merchant commodity storage practice revisited.
\newblock {\em Operations Research}, 63(5):1131--1143, 2015.

\bibitem{Secomandi15}
N.~Secomandi, G.~Lai, F.~Margot, A.~Scheller-Wolf, and D.~J. Seppi.
\newblock Merchant commodity storage and term-structure model error.
\newblock {\em Manufacturing \& Service Operations Management}, 17(3):302--320,
  2015.

\bibitem{Sethi91}
S.~Sethi and G.~Sorger.
\newblock A theory of rolling horizon decision making.
\newblock {\em Annals of Operations Research}, 29(1-4):387 -- 415, 1991.

\bibitem{Shafiee16}
S.~Shafiee, P.~Zamani-Dehkordi, H.~Zareipour, and A.~M. Knight.
\newblock Economic assessment of a price-maker energy storage facility in the
  {A}lberta electricity market.
\newblock {\em Energy}, 111:537 -- 547, 2016.

\bibitem{Sioshansi10}
R.~Sioshansi.
\newblock Welfare impacts of electricity storage and the implications of
  ownership structure.
\newblock {\em The Energy Journal}, 31(2):173--198, 2010.

\bibitem{Sioshansi14}
R.~Sioshansi.
\newblock When energy storage reduces social welfare.
\newblock {\em Energy Economics}, 41:106 -- 116, 2014.

\bibitem{Sioshansi09}
R.~Sioshansi, P.~Denholm, T.~Jenkin, and J.~Weiss.
\newblock Estimating the value of electricity storage in {PJM}: Arbitrage and
  some welfare effects.
\newblock {\em Energy Economics}, 31(2):269 -- 277, 2009.

\bibitem{Steeger14}
G.~Steeger, L.~A. Barroso, and S.~Rebennack.
\newblock Optimal bidding strategies for hydro-electric producers: A literature
  survey.
\newblock {\em IEEE Transactions on Power Systems}, 29:1758--1766, 2014.

\bibitem{Wang14}
Y.~Wang, W.~Saad, Z.~Han, H.~V. Poor, and T.~Başar.
\newblock A game-theoretic approach to energy trading in the smart grid.
\newblock {\em IEEE Transactions on Smart Grid}, 5(3):1439--1450, 2014.

\bibitem{Wu12}
O.~Q. Wu, D.~D. Wang, and Z.~Qin.
\newblock Seasonal energy storage operations with limited flexibility: The
  price-adjusted rolling intrinsic policy.
\newblock {\em Manufacturing \& Service Operations Management}, 14(3):455--471,
  2012.

\bibitem{Zhang13}
W.~{Zhang}, J.~{Lian}, C.~{Chang}, and K.~{Kalsi}.
\newblock Aggregated modeling and control of air conditioning loads for demand
  response.
\newblock {\em IEEE Transactions on Power Systems}, 28(4):4655--4664, Nov 2013.

\end{thebibliography}
\bibliographystyle{plain}

\end{document}